\newtheorem{conj}{Conjecture}
\newtheorem{remark}{Remark}
\newtheorem{theorem}{Theorem}
\newtheorem{proposition}{Proposition}
\begin{document}
\title{An Alexandrov--Fenchel-type inequality for hypersurfaces in the sphere}
\author{Frederico Gir\~ao \and Neilha M. Pinheiro}
\date{}
\maketitle
\begin{abstract}
We find a monotone quantity along the inverse mean curvature flow and use it to prove an Alexandrov--Fenchel-type inequality for strictly convex hypersurfaces in the $n$-dimensional sphere, $n \geq 3$.
\end{abstract}
\section{Introduction}
\label{intro}
If $\Sigma \subset \mathbb{R}^n$, $n \geq 3$, is a convex hypersurface, then the Alexandrov--Fenchel inequalities (\cite{alexandrov1}, \cite{alexandrov2}) say that
\begin{equation} \label{AF_Rn}
\int_{\Sigma} \sigma_k (\lambda) d\Sigma \geq C_{n,k} \left( \int_{\Sigma} \sigma_{k-1}(\lambda) d\Sigma \right)^{\frac{n-k-1}{n-k}},
\end{equation}
where $\sigma_k(\lambda)$, $ 1 \leq k \leq n-1$, is the $k^{\mathrm{th}}$ elementary symmetric function of the principal curvature vector $\lambda = (\lambda_1, \ldots, \lambda_{n-1})$ of $\Sigma$ and $C_{n,k} > 0$ is a universal constant. Moreover, the equality holds if and only if $\Sigma$ is a round sphere. In \cite{guan-li}, using a certain inverse curvature flow, Guan and Li showed that (\ref{AF_Rn}) holds for any $\Sigma$ which is star-shaped and $k$-convex (which means that $\sigma_i(\lambda) \geq 0$ for $i = 0, 1, \ldots, k$).

The $k=1$ case of (\ref{AF_Rn}), namely
$$ \int_{\Sigma} H  d\Sigma \geq (n-1) \omega_{n-1} \left( \frac{|\Sigma|}{\omega_{n-1}} \right)^{\frac{n-2}{n-1}},$$
where $|\Sigma|$ is the area of $\Sigma$, $H = \sigma_1(\lambda)$ is its mean curvature, and $\omega_{n-1}$ is the area of the unit sphere $\mathbb{S}^{n-1} \subset \mathbb{R}^n$, is a key step in the proof of the Penrose inequality for graphs, given by Lam in \cite{lam} (see also \cite{dLG1} and \cite{mirandola-vitorio}). The general case, in its turn, was used in a crucial way to establish, for graphical manifolds, versions of the Penrose inequality in the context of the so called Gauss--Bonnet--Chern mass \cite{GWW} (see also \cite{li-wei-xiong} and \cite{girao-mota}).

Instead of $\mathbb{R}^n$, let's consider the hyperbolic $n$-space $\mathbb{H}^n$ to be the ambient space. We will use the model of $\mathbb{H}^n$ given by $\mathbb{R}_+ \times \mathbb{S}^{n-1}$ endowed with the metric
$$ dr^2 + (\sinh^2r)h,$$
where $h$ is the round metric on the unit sphere $\mathbb{S}^{n-1}$.
We say that a closed, embedded hypersurface $\Sigma \subset \mathbb{H}^n$ is {\it star-shaped} if it can be written as a radial graph over a geodesic sphere centered at the origin. Also, $\Sigma$ is said to be {\it strictly mean-convex} if its mean curvature $H$ is positive everywhere.

Let $\rho: \mathbb{H} \to \mathbb{R}$ be the function given by
$$ \rho (r) = \cosh r.$$
In \cite{dLG2}, de Lima and the first named author showed the following Alexandrov--Fenchel-type inequality: if $\Sigma \subset \mathbb{H}^n$, $n \geq 3$, is star-shaped and strictly mean-convex, then
\begin{equation} \label{AF_Hn}
\int_{\Sigma} \rho H d\Sigma \geq (n-1)\omega_{n-1} \left[ \left( \frac{|\Sigma|}{\omega_{n-1}} \right)^{\frac{n-2}{n-1}} + 
\left( \frac{|\Sigma|}{\omega_{n-1}} \right)^{\frac{n}{n-1}} \right],
\end{equation}
with the equality holding if and only if $\Sigma$ is a geodesic sphere centered at the origin. The proof uses, among other ingredients, two monotone quantities along the inverse mean curvature flow (IMCF) and an inequality due to Brendle, Hung and Wang \cite{BHW}. Inequality (\ref{AF_Hn}) was conjecture by Dahl, Gicquaud and Sakovich in \cite{DGS}, where they considered the Penrose inequality for hyperbolic graphs. They found a formula for the mass of a graph and (\ref{AF_Hn}) was the only thing left to show in order to establish the Penrose inequality in this context. Similar inequalities, with the mean curvature replaced by the $k$-mean curvature $\sigma_k$, for odd $k$,  were obtained in \cite{GWW2}, where they also proved versions of the Penrose inequality for graphs in the context of the hyperbolic Gauss--Bonnet--Chern mass.

Consider now the unit $n$-sphere $\mathbb{S}^n$, $n \geq 3$, to be the ambient space. As before, denote by $h$ the round metric on the unit sphere $\mathbb{S}^{n-1}$.  Recall that $(0,\pi) \times \mathbb{S}^{n-1}$ endowed with the metric
$$ dr^2 + (\sin^2 r)h,$$
gives a model for the round metric on $\mathbb{S}^n$. In analogy with the case where the ambient is $\mathbb{H}^n$, we consider the function $\rho: \mathbb{S}^n \to \mathbb{R}$ given by
\begin{equation} \label{defn_rho_sphere}
\rho (r) = \cos r.
\end{equation}

We consider an embedded hypersurface $\Sigma \subset \mathbb{S}^n$, which we always assume to be closed, orientable and connected.

If $\Sigma \subset \mathbb{S}^n$ is a geodesic sphere centered at the origin, then a straightforward computation gives
$$ \int_{\Sigma} \rho H d\Sigma = (n-1)\omega_{n-1}
\left[ \left( \frac{|\Sigma|}{\omega_{n-1}} \right)^{\frac{n-2}{n-1}} - \left( \frac{|\Sigma|}{\omega_{n-1}} \right)^{\frac{n}{n-1}} \right].$$ 
Therefore, it is tempting to conjecture that, if $\Sigma \subset \mathbb{S}^n$ satisfies some reasonable convexity assumption, then
\begin{equation} \label{wrong_AF_Sn}
\int_{\Sigma} \rho H d\Sigma \geq (n-1)\omega_{n-1} \left[ \left( \frac{|\Sigma|}{\omega_{n-1}} \right)^{\frac{n-2}{n-1}} - 
\left( \frac{|\Sigma|}{\omega_{n-1}} \right)^{\frac{n}{n-1}} \right].
\end{equation}
As we will see later (Proposition \ref{result_1}),
if $\Sigma$ is a geodesic sphere not centered at the origin, then 
inequality (\ref{wrong_AF_Sn}) does not hold. We conjecture, however, that inequality (\ref{wrong_AF_Sn}) holds if $\Sigma$ is strictly convex and the origin is taken appropriately (see Conjecture \ref{conj}). 

If $\Sigma$ is diffeomorphic to $\mathbb{S}^{n-1}$ and it is contained in an open hemisphere (for example, if $\Sigma$ is strictly convex \cite{docarmo-warner}), then $\Sigma$ divides $\mathbb{S}^n$ in two regions: a region that properly contains a hemisphere, which we call the {\it outer region}, and a region that is properly contained in a hemisphere, which we call the {\it inner region}.

It is proved in \cite{makowski-scheuer} (see also \cite{gerhardt}) that if $\Sigma$ is strictly convex, then the IMCF converges, in finite time, to an equator $E_{\Sigma}$. This equator determines two hemispheres, with one of them containing $\Sigma$. We associate to each strictly convex $\Sigma \subset \mathbb{S}^n$ a point $x \in \mathbb{S}^n$ in the following way:
we let $x(\Sigma)$ be the center of the hemisphere, determined by $E_\Sigma$, that contains $\Sigma$ (looking at the hemisphere as a geodesic ball). We will refer to the point $x(\Sigma)$ as {\it the point associated to $\Sigma$ via the IMCF}. Notice that, if $\Sigma$ is a geodesic sphere, then $x(\Sigma)$ is its center.

For $x \in \mathbb{S}^n$, we consider the function $\rho_x: \mathbb{S}^n \to \mathbb{R}$ given by
$$ \rho_x(q) = \cos (r_x(q)),$$
where $r_x(q)$ denotes the geodesic distance from $q$ to $x$. Thus, if $x$ is the origin, we recover the function $\rho$ defined by (\ref{defn_rho_sphere}). 
\begin{conj} \label{conj}
Let $\Sigma \subset \mathbb{S}^n$ be a closed, orientable and connected embedded hypersurface. If $\Sigma$ is strictly convex, then
$$ \sup_{x \in \mathbb{S}^n} \int_{\Sigma} \rho_x H d\Sigma \geq (n-1)\omega_{n-1} \left[ \left( \frac{|\Sigma|}{\omega_{n-1}} \right)^{\frac{n-2}{n-1}} - 
\left( \frac{|\Sigma|}{\omega_{n-1}} \right)^{\frac{n}{n-1}} \right],$$
Moreover, the equality holds if and only if $\Sigma$ is a geodesic sphere.
\end{conj}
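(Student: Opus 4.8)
\emph{Proof proposal.} The plan is to run the inverse mean curvature flow (IMCF) $\partial_t X = H^{-1}\nu$ with $\Sigma_0 = \Sigma$ and to extract the inequality from a monotonicity statement whose endpoint is the equator. Since the supremum is bounded below by the value at any single base point, I would fix $x = x(\Sigma)$, the point associated to $\Sigma$ via the IMCF, and aim to prove
$$ \int_{\Sigma} \rho_{x(\Sigma)} H\, d\Sigma \geq (n-1)\omega_{n-1}\left[\left(\frac{|\Sigma|}{\omega_{n-1}}\right)^{\frac{n-2}{n-1}} - \left(\frac{|\Sigma|}{\omega_{n-1}}\right)^{\frac{n}{n-1}}\right], $$
writing $\rho = \rho_{x(\Sigma)}$ henceforth. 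By \cite{makowski-scheuer, gerhardt} the flow is defined on a finite interval $[0,T)$, stays strictly convex, and converges to the equator $E_\Sigma$; by the very definition of $x(\Sigma)$ each $\Sigma_t$ lies in the closed hemisphere centered at $x(\Sigma)$, so $\rho \geq 0$ along the flow and $\rho \to 0$ as $t \to T$.

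Two evolution identities drive the argument. First, since the speed is $H^{-1}$, the area element satisfies $\partial_t\, d\Sigma_t = d\Sigma_t$, whence $|\Sigma_t| = |\Sigma|\,e^{t}$, so that, writing $a(t) = |\Sigma_t|/\omega_{n-1}$, one has $a' = a$. Second, using that $\rho$ is a static potential on $\mathbb{S}^n$, namely $\bar\nabla^2\rho = -\rho\,\bar g$ and hence $\bar\Delta\rho = -n\rho$, together with $\overline{\mathrm{Ric}}(\nu,\nu) = n-1$ and the standard evolution $\partial_t H = -\Delta_{\Sigma_t}(H^{-1}) - (|A|^2 + \overline{\mathrm{Ric}}(\nu,\nu))H^{-1}$, a computation (integrating $\rho\,\Delta_{\Sigma_t}(H^{-1})$ by parts and using $\Delta_{\Sigma_t}\rho = -(n-1)\rho - H\rho_\nu$) gives the clean identity
$$ \frac{d}{dt}\int_{\Sigma_t}\rho H\, d\Sigma_t = 2\int_{\Sigma_t}\rho_\nu\, d\Sigma_t - \int_{\Sigma_t}\frac{\rho\,|A|^2}{H}\, d\Sigma_t + \int_{\Sigma_t}\rho H\, d\Sigma_t, $$
where $\rho_\nu = \langle\bar\nabla\rho,\nu\rangle$. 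I would then introduce the deficit
$$ D(t) = \int_{\Sigma_t}\rho H\, d\Sigma_t - (n-1)\omega_{n-1}\left[a(t)^{\frac{n-2}{n-1}} - a(t)^{\frac{n}{n-1}}\right], $$
which satisfies $D(T) = 0$ because $\int_{\Sigma_t}\rho H \to 0$ and $a \to 1$ at the equator.

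Now comes the heart of the estimate. Substituting the evolution identity and $a'=a$ into $D'(t)$, and bounding $-\int \rho|A|^2/H \leq -\tfrac{1}{n-1}\int\rho H$ via the Cauchy–Schwarz inequality $|A|^2\geq H^2/(n-1)$ (valid since $\rho\geq 0$, $H>0$), a short computation collapses everything to
$$ D'(t) \leq \frac{n-2}{n-1}\,D(t) + \left(2\int_{\Sigma_t}\rho_\nu\, d\Sigma_t + 2\,\omega_{n-1}\,a(t)^{\frac{n}{n-1}}\right). $$
Writing $\int_{\Sigma_t}\rho_\nu = -n\int_{\Omega_t}\rho\, dV$ by the divergence theorem (with $\Omega_t$ the inner region), the parenthesized term is nonpositive precisely when
$$ \int_{\Omega}\rho_x\, dV \;\geq\; \frac{\omega_{n-1}}{n}\left(\frac{|\partial\Omega|}{\omega_{n-1}}\right)^{\frac{n}{n-1}}, $$
a sharp weighted isoperimetric inequality whose equality case is exactly the geodesic ball centered at $x$ (a direct integration verifies equality there). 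Granting this for $\Omega = \Omega_t$ and $x = x(\Sigma)$, the differential inequality reads $D' \leq \tfrac{n-2}{n-1}D$, so $e^{-\frac{n-2}{n-1}t}D(t)$ is non-increasing; together with $D(T)=0$ this yields $D(0)\geq 0$, which is the claimed inequality.

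The main obstacle is this weighted isoperimetric inequality: unlike in $\mathbb{H}^n$, where the Brendle–Hung–Wang inequality \cite{BHW} supplies the corresponding bound and closes the hyperbolic analog, the spherical version—with the centered geodesic ball as the unique minimizer—is exactly the missing ingredient, and one must also rigorously verify that $\Sigma_t$ remains in the hemisphere (so that $\rho\geq 0$, underwriting both Cauchy–Schwarz and the sign of the weighted volume). Once these are in hand, the equality case is recovered by tracing the two sharp steps backwards: equality in Cauchy–Schwarz forces $\Sigma_t$ totally umbilic, and equality in the weighted inequality forces it to be a slice $\{\rho = \mathrm{const}\}$ centered at $x(\Sigma)$, so $\Sigma$ is a geodesic sphere, establishing the rigidity asserted in Conjecture \ref{conj}.
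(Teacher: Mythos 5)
This statement is Conjecture \ref{conj}, which the paper explicitly leaves open (``Unfortunately, we couldn't find a proof to Conjecture \ref{conj}''), so there is no paper proof to compare against; the question is whether your proposal actually closes the gap. It does not, and the gap is worse than you indicate: your ``missing ingredient'' is not merely unproven, it is \emph{false}, and the paper itself proves its negation. Your flow computation is correct and coincides with the paper's Section 2 machinery: your evolution identity for $\int_{\Sigma_t}\rho H\,d\Sigma_t$ is (\ref{evolution_I_warped}) with $F=-1/H$, your trace bound is the Newton--MacLaurin inequality (\ref{Newton_MacLaurin_warped}), and your deficit computation correctly collapses to
\begin{equation*}
D'(t) \;\leq\; \frac{n-2}{n-1}\,D(t) \;+\; 2\left(\mathcal{K}(\Sigma_t)-\mathcal{L}_x(\Sigma_t)\right),
\end{equation*}
since $\int_{\Sigma_t}\rho_\nu\,d\Sigma_t = -n\int_{\Omega_t}\rho_x\,dV = -\mathcal{L}_x(\Sigma_t)$ and $\omega_{n-1}a(t)^{\frac{n}{n-1}} = \mathcal{K}(\Sigma_t)$. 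You then need the error term to be nonpositive, i.e.\ $\mathcal{L}_x(\Sigma_t) \geq \mathcal{K}(\Sigma_t)$. But Proposition \ref{prop_inequality_j_k} of the paper proves exactly the reverse: $\mathcal{L}_x(\Sigma) \leq \mathcal{K}(\Sigma)$ for \emph{every} $x$, with equality if and only if $\Sigma$ is a geodesic sphere centered at $x$. The reason is elementary: among domains with boundary of fixed area, the centered geodesic ball has the largest volume (isoperimetric inequality), and since $\rho_x = \cos r_x$ is decreasing in $r_x$, rearrangement makes the centered ball the unique \emph{maximizer} of $\int_\Omega \rho_x\,dV$ --- not the minimizer, as your proposal requires. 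So your error term is $\geq 0$ along the entire flow, strictly positive unless $\Sigma_t$ is a centered geodesic sphere, and the differential inequality points the wrong way. No choice of base point rescues this, since $\sup_x \mathcal{L}_x \leq \mathcal{K}$.

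What your (correct) computation actually delivers, once the false lemma is removed, is the paper's Theorem \ref{main}: the authors circumvent the wrong-signed term by keeping $\mathcal{J} = \mathcal{L}$ \emph{inside} the monotone quantity $\mathcal{Q}$ of (\ref{quantity_Q}), controlling its evolution via Brendle's inequality $(n-1)\int_\Sigma \rho/H\,d\Sigma \geq \mathcal{J}(\Sigma)$ (which gives $d\mathcal{J}/dt \geq \frac{n}{n-1}\mathcal{J}$ under IMCF), rather than comparing $\mathcal{L}$ to $\mathcal{K}$ pointwise in time. The price is the deficit factor $\mathcal{L}_x(\Sigma)/\mathcal{K}(\Sigma) \leq 1$ multiplying the right-hand side of (\ref{AF_inequality_sphere}) --- which is precisely why the authors remark that Conjecture \ref{conj} does not follow from Theorem \ref{main} unless $\Sigma$ is a geodesic sphere. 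Your analogy with the hyperbolic case \cite{BHW} breaks at exactly this point: in $\mathbb{H}^n$ the corresponding weighted volume comparison has the favorable sign, whereas on $\mathbb{S}^n$ it flips, and this sign flip is the genuine obstruction that leaves the conjecture open. Your proposal is therefore a correct derivation of the known partial result, not a proof of the conjecture.
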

Unfortunately, we couldn't find a proof to Conjecture \ref{conj}. We were able, however, to prove a related inequality. Before we state it, let's introduce some notation.

We define the quantity $\mathcal{K}$ by
$$ \mathcal{K}(\Sigma) = \omega_{n-1} \left( \frac{|\Sigma|}{\omega_{n-1}} \right)^{\frac{n}{n-1}}.$$
If $x \in \mathbb{S}^n$ and $\Sigma$ is diffeomorphic to $\mathbb{S}^{n-1}$ and contained in an open hemisphere, we define the quantity $\mathcal{L}_x$ by
$$ \mathcal{L}_x(\Sigma) = n \int_{\Omega} \rho_x \ \! d\Omega,$$
where $\Omega$ is the inner region bounded by $\Sigma$. If $x$ is the origin, we write only $\mathcal{L}(\Sigma)$ to denote $\mathcal{L}_x(\Sigma)$, that is,
$$\mathcal{L}(\Sigma) = n \int_{\Omega} \rho \ \! d\Omega.$$

The following is the main result of this paper.
\begin{theorem} \label{main}
Let $\Sigma \subset \mathbb{S}^n$ be a closed, orientable and connected embedded hypersurface.
If $\Sigma$ is strictly convex, then
\begin{equation} \label{AF_inequality_sphere}
\int_{\Sigma} \rho_x H d\Sigma \geq (n-1)\omega_{n-1} \left( \frac{\mathcal{L}_x(\Sigma)}{\mathcal{K}(\Sigma)} \right) \left[ \left( \frac{|\Sigma|}{\omega_{n-1}} \right)^{\frac{n-2}{n-1}} - 
\left( \frac{|\Sigma|}{\omega_{n-1}} \right)^{\frac{n}{n-1}} \right],
\end{equation}
where $x = x(\Sigma)$ is the point associated to $\Sigma$ via the IMCF.
Morever, if the equality holds, then $\Sigma$ is a geodesic sphere centered at $x$.
\end{theorem}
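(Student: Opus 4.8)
The plan is to run the inverse mean curvature flow (IMCF) $\partial_t F = H^{-1}\nu$ starting at $\Sigma_0 = \Sigma$, where $\nu$ is the outward unit normal and $H>0$. By the results of Makowski--Scheuer quoted above, strict convexity is preserved, the flow exists on a maximal finite interval $[0,T)$, the $\Sigma_t$ remain in the hemisphere centered at $x = x(\Sigma)$ and converge to its boundary equator $E_\Sigma$ as $t \to T$; in particular $\rho_x > 0$ on $\Sigma_t$ for $t<T$, while $\rho_x \to 0$ on $E_\Sigma$. I record two elementary facts. Since $\partial_t(d\Sigma_t) = d\Sigma_t$ under IMCF, the area satisfies $|\Sigma_t| = |\Sigma|\,e^t$, whence $e^{-T} = |\Sigma|/\omega_{n-1}$; and, writing $\phi = \rho_x$, the function $\phi$ satisfies the ambient Hessian identity $\overline{\nabla}^2 \phi = -\phi\, g$ on $\mathbb{S}^n$, so that $\overline{\Delta}\phi = -n\phi$.

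The first and central step is to produce the monotone quantity. I would differentiate $\mathcal{H}(t) := \int_{\Sigma_t}\phi H\,d\Sigma$, using $\partial_t\phi = H^{-1}\langle\overline{\nabla}\phi,\nu\rangle$, the evolution equation $\partial_t H = -\Delta_{\Sigma_t}(H^{-1}) - (|A|^2 + (n-1))H^{-1}$ (here $\overline{\mathrm{Ric}}(\nu,\nu) = n-1$), and $\partial_t(d\Sigma) = d\Sigma$. After integrating by parts and inserting the surface-Laplacian identity $\Delta_{\Sigma_t}\phi = -(n-1)\phi - H\langle\overline{\nabla}\phi,\nu\rangle$ (a consequence of $\overline{\nabla}^2\phi=-\phi g$), the second-order terms cancel and one is left with $\mathcal{H}'(t) = \int_{\Sigma_t}\big(2\langle\overline{\nabla}\phi,\nu\rangle - \phi|A|^2/H + \phi H\big)\,d\Sigma$. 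Two inputs then close this step: the pinching $|A|^2 \geq H^2/(n-1)$, valid since there are $n-1$ principal curvatures, which together with $\phi>0$ gives $-\phi|A|^2/H \leq -\phi H/(n-1)$; and the divergence theorem $\int_{\Sigma_t}\langle\overline{\nabla}\phi,\nu\rangle\,d\Sigma = \int_{\Omega_t}\overline{\Delta}\phi\,d\Omega = -\mathcal{L}_x(\Sigma_t)$. Together these yield the differential inequality
\begin{equation}
\mathcal{H}'(t) \leq \frac{n-2}{n-1}\,\mathcal{H}(t) - 2\,\mathcal{L}_x(\Sigma_t). \tag{$\ast$}
\end{equation}

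The hard part, and the genuine geometric input playing here the role that the Brendle--Hung--Wang inequality \cite{BHW} plays in the hyperbolic case, is to bound $\mathcal{L}_x(\Sigma_t)$ from below. Differentiating $\mathcal{L}_x(\Sigma_t) = n\int_{\Omega_t}\phi\,d\Omega$ gives $\mathcal{L}_x'(t) = n\int_{\Sigma_t}\phi/H\,d\Sigma$, so what I need is the weighted Heintze--Karcher inequality
$$(n-1)\int_{\Sigma_t}\frac{\phi}{H}\,d\Sigma \;\geq\; n\int_{\Omega_t}\phi\,d\Omega,$$
with equality precisely for geodesic spheres centered at $x$. This is the sphere analogue of results of Qiu--Xia and Brendle and should be provable via a weighted Reilly formula for the potential $\phi = \cos r_x$; establishing it (and its rigidity) is where I expect the main work to lie. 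The inequality says exactly that $\mathcal{L}_x'(t) \geq \frac{n}{n-1}\mathcal{L}_x(t)$, i.e. that $\mathcal{L}_x(\Sigma_t)/\mathcal{K}(\Sigma_t)$ is non-decreasing, and hence $\mathcal{L}_x(\Sigma_t) \geq \mathcal{L}_x(\Sigma)\,e^{\frac{n}{n-1}t}$.

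Finally, I would assemble everything. Setting $\mathcal{L}_0 = \mathcal{L}_x(\Sigma)$, define
$$ Q(t) = e^{-\frac{n-2}{n-1}t}\,\mathcal{H}(t) + (n-1)\,\mathcal{L}_0\, e^{\frac{2}{n-1}t}.$$
Combining $(\ast)$ with the Heintze--Karcher bound, $Q'(t) = e^{-\frac{n-2}{n-1}t}\big(\mathcal{H}'-\tfrac{n-2}{n-1}\mathcal{H}\big) + 2\mathcal{L}_0 e^{\frac{2}{n-1}t} \leq -2e^{-\frac{n-2}{n-1}t}\mathcal{L}_x(\Sigma_t) + 2\mathcal{L}_0 e^{\frac{2}{n-1}t} \leq 0$, so $Q$ is non-increasing. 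Since $H\to 0$ while the areas stay bounded, $\mathcal{H}(T)=0$, and therefore $Q(0)\geq Q(T)$ reads $\mathcal{H}(0) + (n-1)\mathcal{L}_0 \geq (n-1)\mathcal{L}_0\, e^{\frac{2}{n-1}T}$. Using $e^{\frac{2}{n-1}T} = (|\Sigma|/\omega_{n-1})^{-2/(n-1)}$ together with $\omega_{n-1}/\mathcal{K}(\Sigma) = (|\Sigma|/\omega_{n-1})^{-n/(n-1)}$, a direct rearrangement of $\mathcal{H}(0) \geq (n-1)\mathcal{L}_0\big[(|\Sigma|/\omega_{n-1})^{-2/(n-1)} - 1\big]$ yields precisely (\ref{AF_inequality_sphere}). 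For the rigidity statement, equality forces $|A|^2 = H^2/(n-1)$ along the flow, so each $\Sigma_t$ is totally umbilic, hence a geodesic sphere; as the point associated via the IMCF to a geodesic sphere is its center, $\Sigma$ must be a geodesic sphere centered at $x$.
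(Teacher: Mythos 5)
Your proposal is correct and follows essentially the same route as the paper: IMCF together with the evolution inequality for $\int_\Sigma \rho_x H\,d\Sigma$ via the pinching $|A|^2 \geq H^2/(n-1)$, the divergence identity $\int_{\Sigma_t}\langle\overline{\nabla}\rho_x,\nu\rangle\,d\Sigma = -\mathcal{L}_x(\Sigma_t)$, and the weighted Heintze--Karcher inequality feeding a monotone quantity whose limit at the equator gives the result (your $Q(t)$, which freezes $\mathcal{L}_x$ at its initial value and uses explicit $e^t$-normalizations, is only a cosmetic reorganization of the paper's $\mathcal{A}$-weighted quantity $\mathcal{Q}$, and your rigidity argument via umbilicity is identical). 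The single step you flag as ``where the main work lies,'' namely $(n-1)\int_{\Sigma_t}\rho_x/H\,d\Sigma \geq n\int_{\Omega_t}\rho_x\,d\Omega$ with rigidity, is not open: it is exactly Brendle's inequality \cite{brendle}, whose warped-product hypotheses cover the open hemisphere with $\rho = \cos r$ (see also \cite{qiu-xia}), and this citation is precisely what the paper invokes, so your argument is complete once it is quoted rather than reproved.
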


If $x \in \mathbb{S}^n$, we denote the open hemisphere centered at $x$ (looking at the hemisphere as a geodesic ball) by $\mathbb{S}^n_+(x)$. If $x$ is the origin we write only $\mathbb{S}^n_+$.

As we will see later (Proposition \ref{prop_inequality_j_k}), it holds
$$\mathcal{L}_x(\Sigma) \leq \mathcal{K}(\Sigma),$$
with equality occurring if and only if $\Sigma$ is a geodesic sphere centered at $x$ and contained in $\mathbb{S}^n_+(x)$.
Thus, unless $\Sigma$ is a geodesic sphere, Conjecture \ref{conj} doesn't follow from Theorem \ref{main}.

A strictly convex hypersurface $\Sigma \subset \mathbb{S}^n$ is called {\it balanced} if $x(\Sigma)$, the point associated to $\Sigma$ via the IMCF, is the origin. Notice that, if $\Sigma$ is not balanced, then there exists an isometry $\Psi$ such that $\Psi (\Sigma)$ is balanced. Clearly, in order to prove Theorem \ref{main}, we can assume $\Sigma$ is balanced. If $\Sigma$ is balanced, then inequality (\ref{AF_inequality_sphere}) takes the following form:
\begin{equation} \label{AF_inequality_sphere_balanced}
\int_{\Sigma} \rho H d\Sigma \geq (n-1)\omega_{n-1} \left( \frac{\mathcal{L}(\Sigma)}{\mathcal{K}(\Sigma)} \right) \left[ \left( \frac{|\Sigma|}{\omega_{n-1}} \right)^{\frac{n-2}{n-1}} - 
\left( \frac{|\Sigma|}{\omega_{n-1}} \right)^{\frac{n}{n-1}} \right].
\end{equation}

Before passing to the next section, we would like to mention that the articles \cite{wei-xiong} and \cite{makowski-scheuer} also prove Alexandrov--Fenchel-type inequalities for hypersurfaces in the sphere.

\section{A monotone quantity along the IMCF}
Let $(N^{n-1},g_N)$ be a closed, orientable and connected Riemannian manifold. We consider the product manifold $M = N \times [0,r)$ endowed with the Riemannian metric
\begin{equation} \label{warped product structure}
 \overline{g} = dr^2 + \eta^2(r)g_N,
\end{equation}
where $\eta:[0,r) \to \mathbb{R}$ is a smooth function which is positive on $(0,r)$.

We consider a closed, orientable and connected embedded hypersurface $\Sigma \subset M$. As observed in \cite{brendle}, $M \setminus \Sigma$ has exactly two connected components. One of these components is contained in $N \times [0, r - \delta)$, for some $\delta > 0$. We call this component the {\it inner region} and denote it by $\Omega$. The other component is called the {\it outer region}. Notice that, when $M$ is an open hemisphere and $\Sigma$ is diffeomorphic to $\mathbb{S}^{n-1}$, then this definition of inner region coincides with the previously given one. 

We denote by $\xi$ the unit normal vector field along $\Sigma$ which points to the inner region. The Levi--Civita connections of $M$ and $\Sigma$ are denoted by $D$ and $\nabla$, respectively. We denote by $g$ and $b$, respectively, the metric and the second fundamental form of $\Sigma$. Thus, if $X$ and $Y$ are vector fields tangent to $\Sigma$, then
$$b(X,Y) = g(aX,Y),$$
where
$$aX = -D_X\xi$$
is the shape operator. 
As before, we denote by $\lambda = (\lambda_1, \ldots, \lambda_{n-1})$ the principal curvature vector of $\Sigma$ and by
$$H = \sigma_1(\lambda) = \mathrm{tr}_gb$$
its mean curvature.
We also consider the extrinsic scalar curvature of $\Sigma$, namely
\begin{equation} \label{extrinsic_scalar_curvature_warped}
K = \sigma_2(\lambda) = \sum_{i<j}\lambda_i\lambda_j = \frac{1}{2}\left(H^2 - |a|^2 \right).
\end{equation}
Notice that, by the Newton-MacLaurin inequality,
\begin{equation} \label{Newton_MacLaurin_warped}
2K \leq \frac{n-1}{n-2}H^2,
\end{equation}
with equality occurring at a given point if and only if $\Sigma$ is umbilical there.

Let the function $\rho: M \to \mathbb{R}$ be defined by
\begin{equation} \label{defn_rho_warped}
\rho = \eta^{\prime}(r),
\end{equation}
where prime means derivative with respect to $r$. Notice that, if $N = (\mathbb{S}^{n-1},h)$ and $\eta(r) = \sin(r)$, so that $(M, \overline{g})$ is (part of) the round unit sphere, than the function (\ref{defn_rho_warped}) coincides with (\ref{defn_rho_sphere}).

We assume throughout this section that $(M,\overline{g})$ has scalar curvature 
\begin{equation} \label{scalar curvature}
R_{\overline{g}} = n(n-1)
\end{equation}
and that $\rho$ satisfies the identity
\begin{equation} \label{static_warped}
\left( \Delta_M \rho \right)\overline{g} - D^2 \rho + \rho \mathrm{Ric}_M = 0.
\end{equation}
Taking the trace we find
\begin{equation} \label{traced_static_warped}
\Delta_M \rho = -n\rho.
\end{equation}

\begin{remark} The warped product structure (\ref{warped product structure}) and the identities (\ref{scalar curvature}) and (\ref{static_warped}) are satisfied, for example, by the de Sitter-Schwarzchild manifold.
\end{remark}

The {\it support function} $p$ is defined by
\begin{equation} \label{defn_p_2_warped}
p = \overline{g}(D\rho, \xi),
\end{equation}
where $\rho$ is the function given by (\ref{defn_rho_warped}). We define the quantities $\mathcal{J}$ and $\mathcal{I}$, respectively, by
$$\mathcal{J}(\Sigma) = \int_\Sigma p d\Sigma$$
and
$$\mathcal{I}(\Sigma) = \int_\Sigma \rho H d\Sigma.$$
Notice that (\ref{traced_static_warped}) implies
\begin{equation} \label{relation_J_and_L}
\mathcal{J}(\Sigma) = n\int_\Omega \rho \ \! d\Omega + C,
\end{equation}
where $C$ is a constant.

We consider an one-parameter family $X(t, \cdot): \Sigma \to M$, $t \in [0,\epsilon)$, of isometrically embedded hypersurfaces, satisfying $X(0,\cdot) = \Sigma$ and evolving according to
\begin{equation} \label{flow_warped}
\frac{\partial X}{\partial t} = F \xi,
\end{equation}
where $\xi$ is the unit normal to $\Sigma_t = X(t, \cdot)$ and $F$ is a general speed function. If no confusion arises, we sometimes write the evolving hypersurface simply by $\Sigma$.

\begin{proposition}
It holds
\begin{equation} \label{relation_laplacians}
\Delta_{\Sigma} \rho = -\rho \mathrm{Ric}_M (\xi, \xi) + Hp.
\end{equation}
\end{proposition}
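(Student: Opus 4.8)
The plan is to relate the intrinsic Laplacian $\Delta_\Sigma\rho$ of the restriction $\rho|_\Sigma$ to the ambient data, and then invoke the static equation (\ref{static_warped}) to convert the resulting ambient terms into the Ricci term. First I would compare the ambient Hessian $D^2\rho$ with the intrinsic Hessian $\nabla^2\rho$ along directions tangent to $\Sigma$. For tangent vector fields $X,Y$, the Gauss formula reads $D_XY = \nabla_XY + b(X,Y)\xi$, with the sign dictated by the paper's conventions $aX=-D_X\xi$ and the inner-pointing choice of $\xi$ (indeed $b(X,Y)=g(aX,Y)=-\overline{g}(D_X\xi,Y)=\overline{g}(\xi,D_XY)$). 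Plugging this into $D^2\rho(X,Y)=X(Y\rho)-(D_XY)\rho$ and comparing with $\nabla^2\rho(X,Y)=X(Y\rho)-(\nabla_XY)\rho$ yields
\[
D^2\rho(X,Y)=\nabla^2\rho(X,Y)-p\,b(X,Y),
\]
where I have used $\xi\rho=\overline{g}(D\rho,\xi)=p$ from (\ref{defn_p_2_warped}).

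Next I would take traces. Tracing the previous identity over the tangent space of $\Sigma$ with respect to $g$ and using $\mathrm{tr}_g b=H$ gives $\mathrm{tr}_\Sigma D^2\rho=\Delta_\Sigma\rho-Hp$. On the other hand, splitting the full ambient trace into its tangential and normal parts gives $\Delta_M\rho=\mathrm{tr}_\Sigma D^2\rho+D^2\rho(\xi,\xi)$. Combining these two identities,
\[
\Delta_\Sigma\rho=\Delta_M\rho-D^2\rho(\xi,\xi)+Hp.
\]

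Finally I would invoke the static equation. Evaluating (\ref{static_warped}) on the pair $(\xi,\xi)$ and using $\overline{g}(\xi,\xi)=1$ produces $\Delta_M\rho-D^2\rho(\xi,\xi)=-\rho\,\mathrm{Ric}_M(\xi,\xi)$. Substituting this into the displayed expression for $\Delta_\Sigma\rho$ gives exactly (\ref{relation_laplacians}). The only delicate point is tracking the sign of the second fundamental form term in the Gauss decomposition consistently with the stated conventions; beyond this bookkeeping the argument is a routine trace computation, and I do not anticipate a genuine obstacle.
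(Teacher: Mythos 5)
Your proof is correct and follows essentially the same route as the paper: both arrive at the key intermediate identity $\Delta_\Sigma\rho = \Delta_M\rho - D^2\rho(\xi,\xi) + Hp$ and then conclude by evaluating the static equation (\ref{static_warped}) on $(\xi,\xi)$. The only cosmetic difference is that you compare the two Hessians via the Gauss formula $D_XY = \nabla_XY + b(X,Y)\xi$, while the paper derives the same identity by differentiating the gradient decomposition $\nabla\rho = D\rho - \overline{g}(D\rho,\xi)\xi$; these are dual bookkeeping versions of the same computation, and your sign conventions are consistent with the paper's.
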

\begin{proof}
If $X$ is tangent to $\Sigma$, on one hand we have
\begin{equation} \label{eq_1}
D_X \nabla \rho = \nabla_X \nabla \rho + g(aX,\nabla \rho)\xi.
\end{equation}
On the other hand,
\begin{align}
D_X \nabla \rho =& D_X \left( D \rho - g(D \rho, \xi)\xi \right)\nonumber \\
=& D_X D \rho - X g(D \rho, \xi) \xi - g(D \rho, \xi) D_X \xi. \label{eq_2}
\end{align}
Thus, if $Y$ is tangent to $\Sigma$, we find from (\ref{eq_1}) and (\ref{eq_2}) that
\begin{equation} \label{eq_3}
g( \nabla_X \nabla \rho, Y) = \overline{g}( D_X D \rho, Y) + \overline{g}(D\rho,\xi)g(aX,Y).
\end{equation}
From (\ref{eq_3}) we get
\begin{equation} \label{laplacians_and_mean_curvature}
\Delta_{\Sigma} \rho = \Delta_M \rho - D^2\rho(\xi,\xi) + \overline{g}(D\rho,\xi)H.
\end{equation}
Identity (\ref{relation_laplacians}) follows from (\ref{laplacians_and_mean_curvature}) and (\ref{static_warped}).
\end{proof}

\begin{proposition} \label{evolution_equations_warped}
Under the flow (\ref{flow_warped}), the following evolution equations hold:
\begin{itemize}
\item[] The unit normal evolves as
\begin{equation} \label{evolution_unit_normal_warped}
\frac{\partial \xi}{\partial t} = - \nabla F.
\end{equation}
\item[] The area element $d\Sigma$ evolves as
\begin{equation} \label{evolution_are_element_warped}
\frac{\partial}{\partial t} d\Sigma = -FHd\Sigma;
\end{equation}
in particular, the area $|\Sigma|$ evolves as
\begin{equation} \label{evolution_area_warped}
\frac{d}{d t}|\Sigma| = -\int_\Sigma FHd\Sigma.
\end{equation}
\item[] The mean curvature evolves as
\begin{equation} \label{evolution_mean_curvature_warped}
\frac{\partial H}{\partial t} = \Delta_\Sigma F + \left( |a|^2 + \mathrm{Ric}_M(\xi,\xi) \right)F.
\end{equation}
\item[] The function $\rho$ evolves as
\begin{equation} \label{evolution_rho_warped}
\frac{\partial \rho}{\partial t} = pF.
\end{equation}
\item[] The quantity $\mathcal{J}$ evolves as
\begin{equation} \label{evolution_J_warped}
\frac{d\mathcal{J}}{d t} = -n \int_{\Sigma} F  \rho  d\Sigma.
\end{equation}
\item[] The quantity $\mathcal{I}$ evolves as
\begin{equation} \label{evolution_I_warped}
\frac{d \mathcal{I}}{d t} 
= 2\int_\Sigma pHF d\Sigma - 2\int_\Sigma \rho K F d\Sigma.
\end{equation}
\end{itemize}
\end{proposition}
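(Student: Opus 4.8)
The plan is to establish the six evolution equations in the order listed, obtaining the first four by direct pointwise differentiation along the flow (\ref{flow_warped}) and then assembling the two integrated identities (\ref{evolution_J_warped}) and (\ref{evolution_I_warped}) from them, using the pointwise identities (\ref{relation_laplacians}) and (\ref{laplacians_and_mean_curvature}), the traced static equation (\ref{traced_static_warped}), and integration by parts on the closed hypersurface $\Sigma$ (so that no boundary terms appear). For (\ref{evolution_unit_normal_warped}) I would differentiate the relations $\overline{g}(\xi,\xi) = 1$ and $\overline{g}(\xi, \partial_i X) = 0$ in $t$: the first shows $\partial_t \xi$ is tangent, and the second, together with $\partial_i(\partial_t X) = \partial_i(F\xi)$, identifies its components as $-\partial_i F$, giving $\partial_t \xi = -\nabla F$. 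For the metric I would compute $\partial_t g_{ij} = \overline{g}(\partial_i(F\xi), \partial_j X) + \overline{g}(\partial_i X, \partial_j(F\xi)) = -2F b_{ij}$, using $b_{ij} = -\overline{g}(D_{\partial_i}\xi, \partial_j X)$; tracing with $g^{ij}$ and recalling $d\Sigma = \sqrt{\det g}\,dx$ yields $\partial_t d\Sigma = -FH\,d\Sigma$, and integrating gives (\ref{evolution_area_warped}). Equation (\ref{evolution_rho_warped}) is immediate from the chain rule, since $\rho$ is a fixed function on $M$ evaluated at the moving point: $\partial_t(\rho\circ X) = \overline{g}(D\rho, \partial_t X) = F\,\overline{g}(D\rho,\xi) = pF$.

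The main obstacle is the mean-curvature equation (\ref{evolution_mean_curvature_warped}), the usual Jacobi/Simons-type identity and the only step that invokes the ambient curvature in an essential way. Here I would first derive the evolution of the second fundamental form by commuting $\partial_t$ with two spatial covariant derivatives, the commutator producing a term in the ambient Riemann tensor $\overline{R}$; schematically,
\[ \partial_t b_{ij} = \nabla^2_{ij} F - F\,b_{ik}b^{k}{}_{j} + F\,\overline{R}(\xi, \partial_i, \partial_j, \xi), \]
the signs being fixed by the convention $aX = -D_X \xi$ and by $\xi$ pointing into $\Omega$. Tracing this against $g^{ij}$ and accounting for the metric contribution $\partial_t g^{ij} = 2F\,b^{ij}$ (which supplies $2F|a|^2$) collapses the full curvature contraction into $\mathrm{Ric}_M(\xi,\xi)$ and leaves $\partial_t H = \Delta_\Sigma F + (|a|^2 + \mathrm{Ric}_M(\xi,\xi))F$. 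I expect the bookkeeping of signs (flow into the inner region, our shape-operator convention) to be the delicate point, but no idea beyond the standard computation is needed.

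With the pointwise equations in hand, (\ref{evolution_J_warped}) follows by differentiating $\mathcal{J} = \int_\Sigma p\,d\Sigma$ under the integral sign. I would compute $\partial_t p = \overline{g}(D_{F\xi}D\rho, \xi) + \overline{g}(D\rho, \partial_t\xi) = F\,D^2\rho(\xi,\xi) - g(\nabla\rho,\nabla F)$ using (\ref{evolution_unit_normal_warped}), combine it with the term $p\,\partial_t d\Sigma = -pFH\,d\Sigma$ coming from (\ref{evolution_are_element_warped}), integrate $-\int_\Sigma g(\nabla\rho,\nabla F)\,d\Sigma = \int_\Sigma F\,\Delta_\Sigma\rho\,d\Sigma$ by parts, and then add the relation $D^2\rho(\xi,\xi) + \Delta_\Sigma\rho = \Delta_M\rho + pH$ read off from (\ref{laplacians_and_mean_curvature}). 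The $pH$ terms cancel against $-pFH$, leaving $\int_\Sigma F\,\Delta_M\rho\,d\Sigma = -n\int_\Sigma F\rho\,d\Sigma$ by (\ref{traced_static_warped}).

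Finally, for (\ref{evolution_I_warped}) I would differentiate $\mathcal{I} = \int_\Sigma \rho H\,d\Sigma$, using $\partial_t\rho = pF$, the mean-curvature equation (\ref{evolution_mean_curvature_warped}), and $\partial_t d\Sigma = -FH\,d\Sigma$. Integrating $\int_\Sigma \rho\,\Delta_\Sigma F\,d\Sigma = \int_\Sigma F\,\Delta_\Sigma\rho\,d\Sigma$ by parts and substituting $\Delta_\Sigma\rho = -\rho\,\mathrm{Ric}_M(\xi,\xi) + Hp$ from (\ref{relation_laplacians}) makes the two occurrences of $\rho\,\mathrm{Ric}_M(\xi,\xi)F$ cancel; what remains is $2\int_\Sigma pHF\,d\Sigma + \int_\Sigma \rho\,(|a|^2 - H^2)F\,d\Sigma$, and rewriting $|a|^2 - H^2 = -2K$ via (\ref{extrinsic_scalar_curvature_warped}) gives exactly (\ref{evolution_I_warped}).
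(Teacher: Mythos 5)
Your proposal is correct, and I checked the convention-sensitive step: with $\xi$ inner-pointing and $aX=-D_X\xi$, your schematic evolution of $b_{ij}$ does trace, via $\partial_t g^{ij}=2Fb^{ij}$ and $g^{ij}\overline{R}(\xi,\partial_i,\partial_j,\xi)=\mathrm{Ric}_M(\xi,\xi)$, to exactly (\ref{evolution_mean_curvature_warped}). Your treatment of (\ref{evolution_rho_warped}) and (\ref{evolution_I_warped}) coincides with the paper's: the same chain-rule computation for $\rho$, and the same three-term differentiation of $\mathcal{I}$ followed by integration by parts, substitution of (\ref{relation_laplacians}), and the rewriting $|a|^2-H^2=-2K$ from (\ref{extrinsic_scalar_curvature_warped}). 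There are two departures. First, for (\ref{evolution_unit_normal_warped})--(\ref{evolution_mean_curvature_warped}) the paper simply cites \cite{zhu} as well known, whereas you derive them; this costs length but makes the proposition self-contained, and your derivations are the standard correct ones. Second, and more substantively, for (\ref{evolution_J_warped}) you take a genuinely different route: the paper differentiates the bulk representation (\ref{relation_J_and_L}), $\mathcal{J}=n\int_\Omega \rho\,d\Omega + C$, using the coarea formula --- a two-line argument once (\ref{relation_J_and_L}) is in place --- while you differentiate $\int_\Sigma p\,d\Sigma$ directly on the hypersurface, computing $\partial_t p = F\,D^2\rho(\xi,\xi)-g(\nabla\rho,\nabla F)$ from (\ref{evolution_unit_normal_warped}), integrating by parts, and invoking (\ref{laplacians_and_mean_curvature}) and (\ref{traced_static_warped}); your cancellation of the $pH$ terms checks out and yields $-n\int_\Sigma F\rho\,d\Sigma$. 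The paper's route is shorter and isolates the divergence-theorem input inside (\ref{relation_J_and_L}); yours avoids the bulk region and the coarea formula entirely, runs structurally in parallel with the computation for (\ref{evolution_I_warped}) (both hinge on the Laplacian relation plus the traced static equation), and would apply verbatim to $\int_\Sigma \overline{g}(D\phi,\xi)\,d\Sigma$ for any fixed ambient function $\phi$, at the mild price of needing the evolution of $\xi$.
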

\begin{proof}
Formulas (\ref{evolution_unit_normal_warped}) -- (\ref{evolution_mean_curvature_warped}) are well known (see, for example, \cite{zhu}).
By (\ref{flow_warped}) and (\ref{defn_p_2_warped}) we have 
\begin{align*}
\frac{\partial \rho}{\partial t} &= \overline{g} (D \rho, \frac{\partial X}{\partial t} ) \\
&= \overline{g} ( D \rho, F \xi ) \\
&= Fp,
\end{align*}
which proves (\ref{evolution_rho_warped}). Equation (\ref{evolution_J_warped}) follows from (\ref{relation_J_and_L}), (\ref{flow_warped}) and the coarea formula. It remains to show (\ref{evolution_I_warped}). Using (\ref{evolution_rho_warped}), (\ref{evolution_mean_curvature_warped}) and (\ref{evolution_are_element_warped}), we have
\begin{align*}
\frac{\partial\mathcal{I}}{\partial t} 
&= \int_{\Sigma} \frac{\partial \rho}{\partial t} H d\Sigma + \int_{\Sigma} \rho \frac{\partial H}{\partial t} d\Sigma + \int_{\Sigma} \rho H \frac{\partial}{\partial t} d\Sigma \\
&= \int_{\Sigma} pFH d\Sigma + \int_{\Sigma} \rho \left( \Delta_\Sigma F + \left( |a|^2 + \mathrm{Ric}_M(\xi,\xi) \right)F \right) d\Sigma - \int_{\Sigma} \rho F H^2 d\Sigma \\
&= \int_{\Sigma} pFH d\Sigma + \int_{\Sigma}F \Delta_\Sigma \rho d\Sigma + \int_{\Sigma} \rho \left( |a|^2 + \mathrm{Ric}(\xi,\xi) \right)F d\Sigma - \int_{\Sigma} \rho F H^2 d\Sigma,
\end{align*}
and the result follows, after some cancellations, from (\ref{relation_laplacians}) and (\ref{extrinsic_scalar_curvature_warped}).
\end{proof}

From now on we restrict ourselves to the inverse mean curvature flow
\begin{equation} \label{IMCF_warped}
\frac{\partial X}{\partial t} = -\frac{\xi}{H},
\end{equation}
so that $F = -1/H$.

Let $\mathcal{A}(\Sigma) = |\Sigma|/\omega_{n-1}.$ Notice that, by (\ref{evolution_area_warped}),
\begin{equation} \label{variation_mathcal_A_warped}
\frac{d \mathcal{A}}{dt} = \mathcal{A}.
\end{equation}

\begin{proposition} \label{prop_monotone_quantity}
Suppose 
\begin{equation} \label{A_leq_1_warped}
\mathcal{A} \leq 1
\end{equation}
and
\begin{equation} \label{Brendle_with_alfa}
(n-1) \int_\Sigma \frac{\rho}{H} d\Sigma \geq \mathcal{J}(\Sigma) + \alpha,
\end{equation}
for some constant $\alpha$.
Then, along the flow (\ref{IMCF_warped}), the quantity $\mathcal{Q}$ defined by
\begin{equation} \label{defn_Q_warped}
\mathcal{Q} = \mathcal{A}^{-\frac{n-2}{n-1}} \left[ \mathcal{I} - (n-1) \left(\mathcal{J} + \beta \right) \left( \mathcal{A}^{-\frac{2}{n-1}} -1 \right) + \gamma \mathcal{A}^{\frac{-2}{n-1}} \right]
\end{equation}
satisfies
\begin{equation} \label{Q_is_decreasing}
\frac{d \mathcal{Q}}{dt} \leq 0,
\end{equation}
where
\begin{equation} \label{beta_e_gama}
\beta =  \left( \frac{n-1}{n-2} \right) \alpha \ \ \  \text{and} \ \ \ \gamma = \frac{2(n-1)^2}{n(n-2)} \alpha.
\end{equation}
\end{proposition}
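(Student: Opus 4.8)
The plan is to differentiate $\mathcal{Q}$ directly along (\ref{IMCF_warped}), insert the evolution equations of Proposition \ref{evolution_equations_warped} with $F=-1/H$, and verify that $\beta$ and $\gamma$ in (\ref{beta_e_gama}) are exactly the values for which every term cancels except ones that the two hypotheses and the Newton--MacLaurin inequality force to be nonpositive.

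First I would strip off the prefactor. Writing $\mathcal{Q}=\mathcal{A}^{-\frac{n-2}{n-1}}P$, where $P$ denotes the bracket in (\ref{defn_Q_warped}), and using $\frac{d\mathcal{A}}{dt}=\mathcal{A}$ from (\ref{variation_mathcal_A_warped}), one obtains
$$\frac{d\mathcal{Q}}{dt}=\mathcal{A}^{-\frac{n-2}{n-1}}\left(\frac{dP}{dt}-\frac{n-2}{n-1}P\right).$$
Since the prefactor is positive, the claim (\ref{Q_is_decreasing}) reduces to $\frac{dP}{dt}-\frac{n-2}{n-1}P\le 0$. Next I would compute $\frac{dP}{dt}$. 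Abbreviating $u=\mathcal{A}^{-\frac{2}{n-1}}$, so that $\frac{du}{dt}=-\frac{2}{n-1}u$ by (\ref{variation_mathcal_A_warped}), and specializing (\ref{evolution_I_warped}) and (\ref{evolution_J_warped}) to $F=-1/H$, I get $\frac{d\mathcal{I}}{dt}=-2\mathcal{J}+2\int_{\Sigma}\frac{\rho K}{H}\,d\Sigma$ and $\frac{d\mathcal{J}}{dt}=n\int_{\Sigma}\frac{\rho}{H}\,d\Sigma$.

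Substituting and collecting, I expect the expression to reorganize as
$$\left(2\int_{\Sigma}\frac{\rho K}{H}\,d\Sigma-\frac{n-2}{n-1}\mathcal{I}\right)+(u-1)\left(n\mathcal{J}+(n-2)\beta-n(n-1)\int_{\Sigma}\frac{\rho}{H}\,d\Sigma\right),$$
with the $\mathcal{J}$--terms combining into the $(u-1)$--coefficient and the surplus constant $u$--terms cancelling precisely because $\frac{n}{n-1}\gamma=2\beta$, which is the content of (\ref{beta_e_gama}). Two inequalities then close the estimate. For the first parenthesis I would invoke the Newton--MacLaurin inequality in its sharp form $2K\le\frac{n-2}{n-1}H^2$ (equality iff $\Sigma$ is umbilic): since $\rho>0$ and $H>0$,
$$2\int_{\Sigma}\frac{\rho K}{H}\,d\Sigma-\frac{n-2}{n-1}\mathcal{I}=\int_{\Sigma}\frac{\rho}{H}\left(2K-\frac{n-2}{n-1}H^2\right)d\Sigma\le 0.$$
This is the step where the prefactor exponent $\frac{n-2}{n-1}$ is forced to match the Newton--MacLaurin constant. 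For the second parenthesis I would use (\ref{A_leq_1_warped}), which gives $u-1\ge 0$, together with hypothesis (\ref{Brendle_with_alfa}); feeding $(n-1)\int_{\Sigma}\frac{\rho}{H}\,d\Sigma\ge\mathcal{J}+\alpha$ into the coefficient and using $(n-2)\beta=(n-1)\alpha$ collapses it to $-\alpha$, leaving the residual $-\alpha(u-1)\le 0$. Summing the two nonpositive contributions yields $\frac{dP}{dt}-\frac{n-2}{n-1}P\le 0$, hence (\ref{Q_is_decreasing}).

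The hard part is not any individual estimate but the bookkeeping: confirming that $\beta$ and $\gamma$ as in (\ref{beta_e_gama}) are the unique constants for which the $\mathcal{J}$-- and $u$--contributions annihilate, and recognizing the built-in coincidence that differentiating the prefactor produces exactly the coefficient $\frac{n-2}{n-1}$ needed to absorb the Newton--MacLaurin term. Careful sign tracking (using $\rho,H>0$, $u\ge 1$, and the nonnegativity of $\alpha$ supplied by the ambient geometry) is what guarantees both leftover pieces point the right way; the umbilic equality case of Newton--MacLaurin is moreover what will later drive the rigidity statement in Theorem \ref{main}.
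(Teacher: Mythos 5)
Your proof is correct and takes essentially the same route as the paper's: differentiate $\mathcal{A}^{\frac{n-2}{n-1}}\mathcal{Q}$ along the flow, insert the evolution equations with $F=-1/H$, absorb the $\mathcal{I}$-term via Newton--MacLaurin (you use the correct constant $2K\le\frac{n-2}{n-1}H^2$; the paper's (\ref{Newton_MacLaurin_warped}) states the fraction inverted by typo but applies it correctly in (\ref{inequality_variation_I_warped})), bound the $\mathcal{J}$-term via (\ref{Brendle_with_alfa}) using $u=\mathcal{A}^{-\frac{2}{n-1}}\ge 1$ from (\ref{A_leq_1_warped}), and check that the choices (\ref{beta_e_gama}) cancel the remaining terms. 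Your isolated residual $-\alpha(u-1)$ even makes explicit a point the paper leaves tacit: both arguments need $\alpha\ge 0$ (the paper's intermediate step (\ref{inequality_variation_J_warped}) already presupposes it when it weakens $\frac{n}{n-1}\alpha$ to $\alpha$), which holds in the only application, where $\alpha=0$.
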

\begin{proof}
By (\ref{evolution_I_warped}) and (\ref{Newton_MacLaurin_warped}) we have that
\begin{equation} \label{inequality_variation_I_warped}
\frac{d \mathcal{I}}{d t}
\leq \frac{n-2}{n-1} \mathcal{I} - 2\mathcal{J}.
\end{equation}
Also, by (\ref{evolution_J_warped}) and (\ref{Brendle_with_alfa}), it holds
\begin{equation} \label{inequality_variation_J_warped}
\frac{d \mathcal{J}}{dt} \geq \left( \frac{n}{n-1} \right) \mathcal{J} + \alpha.
\end{equation}
Using (\ref{variation_mathcal_A_warped}), (\ref{A_leq_1_warped}) and (\ref{beta_e_gama}) -- (\ref{inequality_variation_J_warped}) we find that
\begin{align*}
\frac{d}{dt} \left(\mathcal{A}^{\frac{n-2}{n-1}} \mathcal{Q} \right)
& =  \frac{d \mathcal{I}}{dt} - (n-1) \frac{d \mathcal{J}}{dt} \left( \mathcal{A}^{-\frac{2}{n-1}} - 1 \right) + 2 \left( \mathcal{J} + \beta - \frac{1}{n-1} \right) {\mathcal{A}}^{-\frac{2}{n-1}}  \\
& \leq \left( \frac{n-2}{n-1} \right) \mathcal{I} - 2 \mathcal{J} - (n-1) \left(  \frac{n}{n-1}  \mathcal{J} + \alpha \right) \left( \mathcal{A}^{-\frac{2}{n-1}} - 1 \right) \\ 
& \ \ \ \ + 2 \left( \mathcal{J} + \beta - \frac{1}{n-1} \right) {\mathcal{A}}^{-\frac{2}{n-1}} \\
 & = \left( \frac{n-2}{n-1} \right)\left(\mathcal{A}^{\frac{n-2}{n-1}} \mathcal{Q} \right),
\end{align*}
which proves (\ref{Q_is_decreasing}).
\end{proof}

\section{Proof of Theorem \ref{main}}
Let $\Sigma \subset \mathbb{S}^n$ be a strictly convex hypersurface.
Without loss of generality, we can assume that $\Sigma$ is balanced.

It is proved in \cite{makowski-scheuer} that the IMCF is smooth on an interval $[0,T^\ast)$, with $\Sigma_t$ converging to an equator, as $t \to T^\ast$, and with
\begin{equation} \label{integral_of_H}
\lim_{t \to T^\ast} \int_{\Sigma} H d\Sigma = 0.
\end{equation}
Since $\Sigma$ is balanced, the restriction of $\rho$ to $\Sigma_t$ satisfies
$$ 0 < \rho < 1,$$
for each $t \in [0,T^\ast)$. Thus, as $\Sigma_t$ remains strictly convex along the flow (\cite{makowski-scheuer}), we have
$$ 0 < \rho H < H,$$
for each $t \in [0,T^\ast)$. Hence, by (\ref{integral_of_H}) and the squeeze theorem,
\begin{equation} \label{limite_int_rho_H}
\lim_{t \to T^\ast} \int_{\Sigma} \rho H d\Sigma = 0.
\end{equation}

Using again that $\Sigma_t$ converges to an equator, it follows that
\begin{equation} \label{lim_A}
\lim_{t \to T^\ast} \mathcal{A}(t) = 1.
\end{equation}
Hence, since $\mathcal{A}$ is increasing along the flow, we find that
\begin{equation} \label{A_geq_1}
\mathcal{A} \leq 1.
\end{equation}
Moreover, it is proved in \cite{brendle} that the inequality
\begin{equation}
(n-1) \int_\Sigma \frac{\rho}{H} d\Sigma \geq \mathcal{J}(\Sigma)
\end{equation}
holds (see also \cite{qiu-xia}, \cite{miao-kwong}, \cite{li-xia}, and \cite{wang-wang}). Thus, we may invoke Proposition \ref{prop_monotone_quantity} and conclude that the quantity
\begin{equation} \label{quantity_Q}
\mathcal{Q} = \mathcal{A}^{-\frac{n-2}{n-1}} \left[ \mathcal{I} - (n-1) \mathcal{J} \left( \mathcal{A}^{-\frac{2}{n-1}} -1 \right) \right]
\end{equation}
satisfies
$$ \frac{d \mathcal{Q}}{dt} \leq 0 $$
along the IMCF.
Thus, by the fundamental theorem of calculus,
\begin{equation} \label{inequality_TFC}
\mathcal{Q}(t) - \mathcal{Q}(0) \leq 0,
\end{equation}
for all $t \in [0,T^\ast)$.

By (\ref{limite_int_rho_H}) and (\ref{lim_A}) we have
$$ \lim_{t \to T^\ast} \mathcal{Q}(t) = 0.$$
This, together with (\ref{inequality_TFC}), implies 
$$ \mathcal{Q}(0) \geq 0,$$
which is equivalent to inequality (\ref{AF_inequality_sphere_balanced}).

It remains to show the rigidity statement. If $\Sigma$ is a geodesic sphere centered at the origin, then a straightforward computation gives that the equality holds in (\ref{AF_inequality_sphere_balanced}).
If the equality holds in (\ref{AF_inequality_sphere_balanced}), then
$$ \frac{d \mathcal{Q}}{dt} = 0,$$
for each $t \in [0,T^\ast)$. Thus, the equality also holds in (\ref{inequality_variation_I_warped}), for each $t \in [0,T^\ast)$. But, if the equality holds in (\ref{inequality_variation_I_warped}), for each $t \in [0,T^\ast)$, then it also holds in (\ref{Newton_MacLaurin_warped}), for each $t \in [0,T^\ast)$. As a consequence, $\Sigma_0$ is totally umbilical, and hence, it is a geodesic sphere. Since $\Sigma_0$ is balanced, it is a geodesic sphere centered at the origin.

\section{Secondary results}
In this section we prove the remaining two results mentioned in the Introduction.

\begin{proposition} \label{prop_inequality_j_k}
Let $x \in \mathbb{S}^n$ and let $\Sigma \subset \mathbb{S}^n$ be a closed, orientable and connected embedded hypersurface.
%, with $\Sigma$ contained in $\mathbb{S}^n_+(x)$.
Then it holds
$$ \mathcal{L}_x (\Sigma) \leq \mathcal{K}(\Sigma),$$
with the equality holding if and only if $\Sigma$ is a geodesic sphere centered at $x$ and contained in $\mathbb{S}^n_+(x)$.
\end{proposition}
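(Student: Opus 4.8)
The plan is to recognize $\rho_x$ as the restriction to $\mathbb{S}^n$ of a linear coordinate function and then run a two-step rearrangement-plus-isoperimetry argument. Viewing $\mathbb{S}^n \subset \mathbb{R}^{n+1}$ and recalling that $r_x(q)$ is the geodesic distance, one has the elementary identity $\rho_x(q) = \cos r_x(q) = \langle x, q \rangle$, the Euclidean inner product. The key consequence is that the super-level sets $\{\rho_x \geq c\}$ are exactly the geodesic balls centered at $x$, and this is what drives everything. Throughout, $\Omega$ denotes the inner region bounded by $\Sigma$; for $\mathcal{L}_x(\Sigma)$ to be defined, $\Sigma$ is diffeomorphic to $\mathbb{S}^{n-1}$ and contained in an open hemisphere, so $\Omega$ lies in that hemisphere and hence $|\Omega| < \tfrac12 |\mathbb{S}^n|$.

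First I would introduce a comparison ball. Let $B_s = B_s(x)$ be the geodesic ball centered at $x$ whose volume equals $|\Omega|$; since $|\Omega| < \tfrac12 |\mathbb{S}^n|$, its radius satisfies $s < \pi/2$, so $\rho_x > 0$ on $B_s$. The first step is the rearrangement inequality
$$ \int_\Omega \rho_x \, d\Omega \leq \int_{B_s} \rho_x, $$
which I would prove directly rather than cite. Writing the difference as $\int_{\Omega \setminus B_s} \rho_x - \int_{B_s \setminus \Omega} \rho_x$ and using $\rho_x \leq \cos s$ on $\Omega \setminus B_s$ (points at distance $\geq s$ from $x$) and $\rho_x \geq \cos s$ on $B_s \setminus \Omega$ (distance $\leq s$), together with $|\Omega \setminus B_s| = |B_s \setminus \Omega|$ (equal volumes), the difference is $\leq 0$. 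Since $\rho_x < \cos s$ strictly off the null set $\{r_x = s\}$, equality forces both symmetric differences to be null, i.e. $\Omega = B_s$ up to measure zero. A one-line computation in geodesic polar coordinates then gives $n \int_{B_s} \rho_x = \omega_{n-1} \sin^n s$.

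Second, I would invoke the spherical isoperimetric inequality: among sets of volume $|\Omega|$ the geodesic ball $B_s$ minimizes perimeter, so $|\partial B_s| \leq |\partial \Omega| = |\Sigma|$, that is $\omega_{n-1} \sin^{n-1} s \leq |\Sigma|$, equivalently $\sin^{n-1} s \leq |\Sigma|/\omega_{n-1}$. Combining the two steps,
$$ \mathcal{L}_x(\Sigma) = n\int_\Omega \rho_x \, d\Omega \leq \omega_{n-1} \sin^n s = \omega_{n-1}\big( \sin^{n-1} s \big)^{\frac{n}{n-1}} \leq \omega_{n-1} \left( \frac{|\Sigma|}{\omega_{n-1}} \right)^{\frac{n}{n-1}} = \mathcal{K}(\Sigma), $$
which is the asserted inequality.

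For rigidity, equality throughout forces equality in the rearrangement step, hence $\Omega = B_s(x)$ up to a null set, and since $\Omega$ is the smooth interior of $\Sigma$ this yields $\Sigma = \partial B_s(x)$, a geodesic sphere centered at $x$ of radius $s < \pi/2$, hence contained in $\mathbb{S}^n_+(x)$; the isoperimetric step is then automatically saturated. The converse, that such a centered sphere realizes equality, is the same polar-coordinate computation. I expect the main delicate point to be the equality analysis: one must pin down that the rearrangement step is saturated only by centered balls, which is exactly where the null-set structure of the level sets of $\rho_x$ is used, and one should be careful that the hypotheses guaranteeing $|\Omega| < \tfrac12 |\mathbb{S}^n|$ (so that $s < \pi/2$ and $\rho_x > 0$ on $B_s$) are in force throughout.
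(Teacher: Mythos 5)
Your proof is correct, but it normalizes the comparison ball dually to the paper. The paper fixes $\hat{\Sigma}$ to be the centered geodesic sphere with the \emph{same area} as $\Sigma$ (so $\mathcal{K}(\hat{\Sigma}) = \mathcal{K}(\Sigma)$ automatically), then uses the isoperimetric inequality to compare the volumes of the symmetric-difference pieces $\Omega \setminus \hat{\Omega}$ and $\hat{\Omega} \setminus \Omega$, and finally the radial monotonicity of $\rho$ to conclude $\mathcal{L}(\Sigma) \leq \mathcal{L}(\hat{\Sigma}) = \mathcal{K}(\hat{\Sigma})$; you instead fix $B_s(x)$ to have the \emph{same volume} as $\Omega$, prove the bathtub-type rearrangement bound $\int_\Omega \rho_x \leq \int_{B_s} \rho_x$ by exactly the symmetric-difference mechanism the paper uses (with $|\Omega \setminus B_s| = |B_s \setminus \Omega|$ free, by construction), compute $n\int_{B_s}\rho_x = \omega_{n-1}\sin^n s$ explicitly, and only then invoke isoperimetry in its perimeter form $\omega_{n-1}\sin^{n-1} s \leq |\Sigma|$. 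The two routes use the same ingredients, but yours buys a cleaner rigidity analysis: equality in your rearrangement step alone forces $\Omega = B_s(x)$ via the strict monotonicity of $\rho_x$ off the null level set $\{r_x = s\}$, so you never need the equality case of the spherical isoperimetric inequality, whereas the paper's chain leans on it (its displayed inequalities (12)--(13) are in fact stated with directions swapped relative to the final chain of estimates, a typo your self-contained version avoids entirely). One small point in the paper's favor: its equal-area normalization makes the final identification $\mathcal{L}(\hat{\Sigma}) = \mathcal{K}(\Sigma)$ immediate, with no need to raise the perimeter bound to the power $\tfrac{n}{n-1}$, while your version must (and correctly does) check that this power comparison is legitimate and that $s < \pi/2$, using $|\Omega| < \tfrac12|\mathbb{S}^n|$, which you rightly extract from the definition of the inner region.
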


\begin{proof}
We can suppose, without loss of generality, that $x$ is the origin. We then need to show that
$$ \mathcal{L}(\Sigma) \leq \mathcal{K}(\Sigma),$$
with equality holding if and only if $\Sigma$ is a geodesic sphere centered at the origin and contained in $\mathbb{S}^n_+$.

Let $\hat{\Sigma}$ be the geodesic sphere centered at the origin, with $\hat{\Sigma}$ contained in $\mathbb{S}^n_+$ and such that 
$$ \mathcal{K}(\Sigma) = \mathcal{K}(\hat{\Sigma}).$$
A straightforward computation shows that
\begin{equation} \label{j=k}
\mathcal{L}(\hat{\Sigma}) = \mathcal{K}(\hat{\Sigma}).
\end{equation}
Denote by $\Omega$ and $\hat{\Omega}$, respectively, the inner regions of $\Sigma$ and $\hat{\Sigma}$. By the isoperimetric inequality, we have
%$$ \int_{\hat{\Omega}} 1 dV \leq \int_{\Omega} 1 dV,$$
%with the equality holding if and only if $\Sigma$ is a geodesic sphere. Hence, we have
\begin{equation} \label{inequality_volume_2}
\int_{\hat{\Omega} \setminus \Omega} 1 dV \leq \int_{\Omega \setminus \hat{\Omega}} 1 dV,
\end{equation}
with the equality holding if and only if $\Sigma$ is a geodesic sphere.

It follows from (\ref{inequality_volume_2}) and the fact that $\rho$ is decreasing with respect to $r$ that
\begin{equation} \label{inequality setminus}
\int_{\hat{\Omega} \setminus \Omega} \rho \ \! dV \leq \int_{\Omega \setminus \hat{\Omega}} \rho \ \! dV,
\end{equation}
with the equality holding if and only if $\Sigma=\hat{\Sigma}$.

Using (\ref{j=k}) and (\ref{inequality setminus}) we get
\begin{align*}
\mathcal{L}(\Sigma)
%&= n \int_{\Omega} \rho \ \! dV \\
&= n \int_{\Omega \setminus \hat{\Omega}} \rho \ \! dV + n \int_{\Omega \cap \hat{\Omega}} \rho \ \! dV \\
&\leq n \int_{\hat{\Omega} \setminus \Omega} \rho \ \! dV + n \int_{\Omega \cap \hat{\Omega}} \rho \ \! dV \\
%& = n \int_{\hat{\Omega}} \rho \ \! dV \\
& = \mathcal{L}(\hat{\Sigma})  \\
%& = \mathcal{K}(\hat{\Sigma}) \\
& = \mathcal{K}(\Sigma),
\end{align*}
with the equality holding if and only if $\Sigma = \hat{\Sigma}$, that is, if and only if $\Sigma$ is a geodesic sphere centered at the origin and contained in $\mathbb{S}^n_+$.
\end{proof}

\begin{proposition} \label{result_1}
If $\Sigma$ is a geodesic sphere not centered at the origin, then
$$ \int_{\Sigma} \rho H d\Sigma < (n-1)\omega_{n-1} \left[ \left( \frac{|\Sigma|}{\omega_{n-1}} \right)^{\frac{n-2}{n-1}} - 
\left( \frac{|\Sigma|}{\omega_{n-1}} \right)^{\frac{n}{n-1}} \right].$$
\end{proposition}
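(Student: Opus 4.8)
The plan is to recognize that, under the standard isometric embedding $\mathbb{S}^n \hookrightarrow \mathbb{R}^{n+1}$, the function $\rho$ is the restriction of a linear coordinate, which trivializes the whole computation. Let $e \in \mathbb{S}^n \subset \mathbb{R}^{n+1}$ be the unit vector representing the origin, so that $\rho(q) = \cos(r_{\mathrm{o}}(q)) = \langle q, e\rangle$ for $q \in \mathbb{S}^n$. A geodesic sphere $\Sigma$ of radius $r$ and center $y$ is the slice $\{q \in \mathbb{S}^n : \langle q, y\rangle = \cos r\}$, which is a round $(n-1)$-sphere of radius $\sin r$ contained in the affine hyperplane $\{\langle \cdot, y\rangle = \cos r\}$, with Euclidean center $c = (\cos r)\,y$. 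Since everything about a geodesic sphere depends only on its radius, the problem reduces to elementary quantities, and the right-hand side of the claimed inequality is, by the ``straightforward computation'' recalled in the Introduction, precisely the value of $\int \rho H\,d\Sigma$ attained by the \emph{centered} geodesic sphere of the same area.

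I would then proceed in three steps. First, record that every geodesic sphere of radius $r$ has constant mean curvature $H = (n-1)\cot r$ (with respect to the inner-pointing normal $\xi$) and area $|\Sigma| = \omega_{n-1}\sin^{n-1} r$, so that $\int_\Sigma \rho H\,d\Sigma = (n-1)(\cot r)\int_\Sigma \rho\,d\Sigma$. Second, compute $\int_\Sigma \rho\,d\Sigma$ by linearity: writing $\rho(q) = \langle c, e\rangle + \langle q - c, e\rangle$, the second term is odd under reflection through the Euclidean center $c$ and hence integrates to zero over the round sphere $\Sigma$, giving $\int_\Sigma \rho\,d\Sigma = \langle c, e\rangle\,|\Sigma| = (\cos r)\langle y, e\rangle\,|\Sigma| = (\cos r)(\cos d)\,|\Sigma|$, where $d = d(\mathrm{o}, y) > 0$ is the geodesic distance from the origin to the center, so that $\langle y, e\rangle = \cos d$. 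Third, assemble the two: $\int_\Sigma \rho H\,d\Sigma = (\cos d)\,(n-1)\omega_{n-1}\cos^2 r\,\sin^{n-2} r = (\cos d)\cdot\mathrm{RHS}$. Since $\mathrm{RHS} = (n-1)\omega_{n-1}\sin^{n-2} r\,\cos^2 r > 0$ and $\Sigma$ is not centered at the origin, $d > 0$ forces $\cos d < 1$, and the strict inequality $\int_\Sigma \rho H\,d\Sigma < \mathrm{RHS}$ follows immediately.

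There is no serious analytic obstacle here; the entire content lies in the reformulation, and the only points demanding care are bookkeeping. One must fix the sign convention for the mean curvature correctly: with $\xi$ pointing into the inner region (the geodesic ball), one should verify $H = +(n-1)\cot r$ rather than its negative. One must also justify that the measure induced on $\Sigma$ by $\mathbb{S}^n$ is symmetric under reflection through $c$, which is what kills the odd part; this is immediate once one notes the induced metric is exactly the round metric of the $(n-1)$-sphere of radius $\sin r$. Finally, I would implicitly use $r < \pi/2$ (so $\cos r > 0$), consistent with the strictly convex setting of the paper, although the decisive estimate $\cos d < 1$ does not in fact depend on this.
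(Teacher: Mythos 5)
Your proof is correct, but it takes a genuinely different route from the paper's. The paper compares $\Sigma$ with the centered geodesic sphere $\hat{\Sigma}$ of equal area: integrating a Minkowski-type formula (formula (8.4) of Al\'{\i}as--de Lira--Malacarne) gives $(n-2)\mathcal{I}(\Sigma) = 2K\mathcal{L}(\Sigma)$ for any geodesic sphere, and since $K = \hat{K}$ when the areas agree, the strict inequality is inherited from the strict case of Proposition \ref{prop_inequality_j_k}, $\mathcal{L}(\Sigma) < \mathcal{K}(\Sigma) = \mathcal{L}(\hat{\Sigma})$, which in turn rests on the isoperimetric inequality and the monotonicity of $\rho$ in $r$. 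You instead exploit the embedding $\mathbb{S}^n \subset \mathbb{R}^{n+1}$, where $\rho = \langle \cdot\,, e\rangle$ is linear, and your bookkeeping checks out: $H = (n-1)\cot r$ with the paper's inner-pointing convention (in the warped product, $D_X \partial_r = (\cot r) X$ and $\xi = -\partial_r$ for the centered parametrization), the reflection argument killing the odd part is legitimate since the induced metric is the round metric of the sphere of radius $\sin r$ about $c$, and the right-hand side indeed equals $(n-1)\omega_{n-1}\sin^{n-2} r\,\cos^2 r$. Your method even yields the exact identity $\int_\Sigma \rho H\, d\Sigma = (\cos d)\,(n-1)\omega_{n-1}\sin^{n-2} r\,\cos^2 r$, strictly stronger than the proposition, as it quantifies the deficit by the factor $\cos d$. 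The one point to make fully explicit is the normalization $r < \pi/2$: a geodesic sphere of radius $r > \pi/2$ about $y$ is the sphere of radius $\pi - r$ about $-y$, so ``center'' and the sign of $H$ must be tied to the inner region, and ``not centered at the origin'' read accordingly; equators ($r = \pi/2$) must be excluded, since there both sides vanish and the strict inequality fails --- an exclusion the paper also makes implicitly, as $\mathcal{L}$ is only defined for hypersurfaces contained in an open hemisphere. With that fixed, your route is more elementary and self-contained (no Minkowski formula, no isoperimetric inequality), whereas the paper's argument stays intrinsic to the quantities $\mathcal{I}$, $\mathcal{K}$, $\mathcal{L}$ it has developed, reuses Proposition \ref{prop_inequality_j_k}, and would adapt to warped-product settings lacking a linear realization of $\rho$.
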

\begin{proof}
Denote by $\hat{\Sigma}$ the geodesic sphere centered at the origin, with $\hat{\Sigma}$ contained in $\mathbb{S}^n_+$ and such that
\begin{equation} \label{same_area}
| \hat{\Sigma} | = | \Sigma |.
\end{equation}
 A straightforward computation gives
$$
\mathcal{I}(\hat{\Sigma}) =
(n-1)\omega_{n-1} \left[ \left( \frac{|\hat{\Sigma}|}{\omega_{n-1}} \right)^{\frac{n-2}{n-1}} - 
\left( \frac{|\hat{\Sigma}|}{\omega_{n-1}} \right)^{\frac{n}{n-1}} \right].
$$
Thus, by (\ref{same_area}), it is enough to show that 
$$\mathcal{I}(\Sigma)  < \mathcal{I}(\hat{\Sigma}).$$

Noticed that the vector field $D \rho$ is conformal and
$$ \nabla \rho = \left( D \rho \right)^{\top}.$$
Thus, it follows from formula (8.4) of \cite{alias-delira-malacarne} that
\begin{equation} \label{second_Minkowski_warped}
\mathrm{div}(G \nabla \rho) = -(n-2)\rho H + 2pK.
\end{equation}
Integrating (\ref{second_Minkowski_warped}) and using that $K$ is constant we find
\begin{equation} \label{relation_I_J_geodesic_sphere}
(n-2) \mathcal{I}(\Sigma) = 2K \mathcal{L}(\Sigma),
\end{equation}
where we have used that
$$ \mathcal{J}(\Sigma) = \mathcal{L}(\Sigma).$$
In a similar way,
\begin{equation} \label{relation_I_J_geodesic_sphere_hat}
(n-2) \mathcal{I}(\hat{\Sigma}) = 2\hat{K} \mathcal{L}(\hat{\Sigma}),
\end{equation}
where $\hat{K}$ is the extrinsic scalar curvature of $\hat{\Sigma}$.

Therefore, since $K = \hat{K}$, the result follows from Proposition \ref{prop_inequality_j_k} together with (\ref{relation_I_J_geodesic_sphere}) and (\ref{relation_I_J_geodesic_sphere_hat}).

\end{proof}

%\bibliographystyle{abbrv}
%\bibliography{references}

%\begin{comment}

%\end{comment}
\begin{multicols}{2}
\noindent
Frederico Gir\~ao \\
Universidade Federal do Cear\'a \\
{\tt fred@mat.ufc.br} \\
Neilha M. Pinheiro \\
Universidade Federal do Cear\'a \\
{\tt neilhamat@gmail.com}
\end{multicols}
\end{document}